\begin{document}

\newtheorem{thm}{Theorem}
\newtheorem{lem}[thm]{Lemma}
\newtheorem{claim}[thm]{Claim}
\newtheorem{cor}[thm]{Corollary}
\newtheorem{prop}[thm]{Proposition} 
\newtheorem{definition}[thm]{Definition}
\newtheorem{rem}[thm]{Remark} 
\newtheorem{question}[thm]{Open Question}
\newtheorem{conj}[thm]{Conjecture}
\newtheorem{prob}{Problem}

\newtheorem{lemma}[thm]{Lemma}

\newcommand{\GL}{\operatorname{GL}}
\newcommand{\SL}{\operatorname{SL}}
\newcommand{\lcm}{\operatorname{lcm}}
\newcommand{\ord}{\operatorname{ord}}
\newcommand{\Op}{\operatorname{Op}}
\newcommand{\Tr}{\operatorname{Tr}}
\newcommand{\Nm}{\operatorname{Nm}}

\numberwithin{equation}{section}
\numberwithin{thm}{section}
\numberwithin{table}{section}

\def\vol {{\mathrm{vol\,}}}
\def\squareforqed{\hbox{\rlap{$\sqcap$}$\sqcup$}}
\def\qed{\ifmmode\squareforqed\else{\unskip\nobreak\hfil
\penalty50\hskip1em\null\nobreak\hfil\squareforqed
\parfillskip=0pt\finalhyphendemerits=0\endgraf}\fi}

\def \balpha{\bm{\alpha}}
\def \bbeta{\bm{\beta}}
\def \bgamma{\bm{\gamma}}
\def \blambda{\bm{\lambda}}
\def \bchi{\bm{\chi}}
\def \bphi{\bm{\varphi}}
\def \bpsi{\bm{\psi}}
\def \bomega{\bm{\omega}}
\def \btheta{\bm{\vartheta}}

\newcommand{\bfxi}{{\boldsymbol{\xi}}}
\newcommand{\bfrho}{{\boldsymbol{\rho}}}

\def\Kab{\sfK_\psi(a,b)}
\def\Kuv{\sfK_\psi(u,v)}
\def\SaUV{\cS_\psi(\balpha;\cU,\cV)}
\def\SaAV{\cS_\psi(\balpha;\cA,\cV)}

\def\SUV{\cS_\psi(\cU,\cV)}
\def\SAB{\cS_\psi(\cA,\cB)}

\def\Kmnp{\sfK_p(m,n)}

\def\KKap{\cH_p(a)}
\def\KKaq{\cH_q(a)}
\def\KKmnp{\cH_p(m,n)}
\def\KKmnq{\cH_q(m,n)}

\def\Klmnp{\sfK_p(\ell, m,n)}
\def\Klmnq{\sfK_q(\ell, m,n)}

\def \SALMNq {\cS_q(\balpha;\cL,\cI,\cJ)}
\def \SALMNp {\cS_p(\balpha;\cL,\cI,\cJ)}

\def \SACXMQX {\fS(\balpha,\bzeta, \bxi; M,Q,X)}

\def\SAMJp{\cS_p(\balpha;\cM,\cJ)}
\def\SAMJq{\cS_q(\balpha;\cM,\cJ)}
\def\SAqMJq{\cS_q(\balpha_q;\cM,\cJ)}
\def\SAJq{\cS_q(\balpha;\cJ)}
\def\SAqJq{\cS_q(\balpha_q;\cJ)}
\def\SAIJp{\cS_p(\balpha;\cI,\cJ)}
\def\SAIJq{\cS_q(\balpha;\cI,\cJ)}

\def\RIJp{\cR_p(\cI,\cJ)}
\def\RIJq{\cR_q(\cI,\cJ)}

\def\TWXJp{\cT_p(\bomega;\cX,\cJ)}
\def\TWXJq{\cT_q(\bomega;\cX,\cJ)}
\def\TWpXJp{\cT_p(\bomega_p;\cX,\cJ)}
\def\TWqXJq{\cT_q(\bomega_q;\cX,\cJ)}
\def\TWJq{\cT_q(\bomega;\cJ)}
\def\TWqJq{\cT_q(\bomega_q;\cJ)}

 \def \xbar{\overline x}
  \def \ybar{\overline y}

\def\cA{{\mathcal A}}
\def\cB{{\mathcal B}}
\def\cC{{\mathcal C}}
\def\cD{{\mathcal D}}
\def\cE{{\mathcal E}}
\def\cF{{\mathcal F}}
\def\cG{{\mathcal G}}
\def\cH{{\mathcal H}}
\def\cI{{\mathcal I}}
\def\cJ{{\mathcal J}}
\def\cK{{\mathcal K}}
\def\cL{{\mathcal L}}
\def\cM{{\mathcal M}}
\def\cN{{\mathcal N}}
\def\cO{{\mathcal O}}
\def\cP{{\mathcal P}}
\def\cQ{{\mathcal Q}}
\def\cR{{\mathcal R}}
\def\cS{{\mathcal S}}
\def\cT{{\mathcal T}}
\def\cU{{\mathcal U}}
\def\cV{{\mathcal V}}
\def\cW{{\mathcal W}}
\def\cX{{\mathcal X}}
\def\cY{{\mathcal Y}}
\def\cZ{{\mathcal Z}}
\def\Ker{{\mathrm{Ker}}}

\def\NmQR{N(m;Q,R)}
\def\VmQR{\cV(m;Q,R)}

\def\Xm{\cX_m}

\def \A {{\mathbb A}}
\def \B {{\mathbb A}}
\def \C {{\mathbb C}}
\def \F {{\mathbb F}}
\def \G {{\mathbb G}}
\def \L {{\mathbb L}}
\def \K {{\mathbb K}}
\def \PP {{\mathbb P}}
\def \Q {{\mathbb Q}}
\def \R {{\mathbb R}}
\def \Z {{\mathbb Z}}
\def \fS{\mathfrak S}

\def\e{{\mathbf{\,e}}}
\def\ep{{\mathbf{\,e}}_p}
\def\eq{{\mathbf{\,e}}_q}

\def\\{\cr}
\def\({\left(}
\def\){\right)}
\def\fl#1{\left\lfloor#1\right\rfloor}
\def\rf#1{\left\lceil#1\right\rceil}

\def\Tr{{\mathrm{Tr}}}
\def\Nm{{\mathrm{Nm}}}
\def\Im{{\mathrm{Im}}}

\def \oF {\overline \F}

\newcommand{\pfrac}[2]{{\left(\frac{#1}{#2}\right)}}

\def \Prob{{\mathrm {}}}
\def\e{\mathbf{e}}
\def\ep{{\mathbf{\,e}}_p}
\def\epp{{\mathbf{\,e}}_{p^2}}
\def\em{{\mathbf{\,e}}_m}

\def\Res{\mathrm{Res}}
\def\Orb{\mathrm{Orb}}

\def\vec#1{\mathbf{#1}}
\def \va{\vec{a}}
\def \vb{\vec{b}}
\def \vn{\vec{n}}
\def \vu{\vec{u}}
\def \vv{\vec{v}}
\def \vz{\vec{z}}
\def\flp#1{{\left\langle#1\right\rangle}_p}
\def\T {\mathsf {T}}

\def\sfG {\mathsf {G}}
\def\sfK {\mathsf {K}}

\def\mand{\qquad\mbox{and}\qquad}


\title[Weil Sums over Small Subgroups]
{Weil Sums over Small Subgroups}

\author[A. Ostafe] {Alina Ostafe}
\address{School of Mathematics and Statistics, University of New South Wales, Sydney NSW 2052, Australia}
\email{alina.ostafe@unsw.edu.au}

\author[I. E. Shparlinski] {Igor E. Shparlinski}
\address{School of Mathematics and Statistics, University of New South Wales, Sydney NSW 2052, Australia}
\email{igor.shparlinski@unsw.edu.au}

 \author[J. F. Voloch] {Jos\'e Felipe Voloch}

\address{
School of Mathematics and Statistics,
University of Canterbury,
Private Bag 4800, Christchurch 8140, New Zealand}
\email{felipe.voloch@canterbury.ac.nz}

\begin{abstract}   We obtain new bounds on short Weil sums over small multiplicative subgroups of 
prime finite fields which remain nontrivial in the range the classical Weil bound is already trivial. 
The method we use is a blend of techniques coming from algebraic geometry and additive 
combinatorics. 
\end{abstract}

\subjclass[2020]{11D79, 11L07, 11T23}

\keywords{Systems of diagonal congruences,  Weil sums  over 
subgroups}

\maketitle
\tableofcontents
%
%
%

\section{Introduction}
\subsection{Set-up and motivation}
\label{sec:setup}

Let $p$ be a prime. Given a subset $\cX$ of a finite field $\F_p$ of $p$ elements and a polynomial $f \in \F_p[X]$,
we define the {\it Weil sum\/} over $\cX$ as
$$
S(\cX;f) = \sum_{x \in \cX} \ep(f(x)),
$$
where 
$$
\ep(z) = \exp(2 \pi i z/p),
$$
and we always assume that the elements of $\F_p$ are represented by the set $ \{0, \ldots, p-1\}$.

The celebrated result of Weil~\cite{Weil} asserts that for any nontrivial polynomial $f \in \F_p[X]$, when $\cX = \F_p$,
we have
\begin{equation}
\label{eq:Weil}
\left| S(\F_p;f) \right| \le (n - 1) p^{1/2},
\end{equation}
where $n = \deg f$, 
see also, for example,~\cite[Chapter~11]{IwKow} and~\cite[Chapter~6]{Li1}. 

The sums $S(\F_p;f)$ are usually called {\it complete sums\/}. The problem usually becomes 
harder for smaller sets $\cX$, that is, for sums called {\it incomplete sums\/}. 

Most of the attention the incomplete sums received is in the case of the sets $\cI_N = \{0, \ldots, N-1\}$ 
of $N \le p$ consecutive integers. In fact, using the classical Weil bound and the completing 
technique (see~\cite[Section~12.2]{IwKow}),  it is easy to show that in this case 
\begin{equation}
\label{eq:Weil Incompl}
S(\cI_N;f) = O(p^{1/2} \log p)
\end{equation}
for any nonlinear  polynomial $f \in \F_p[X]$, where the 
implied constant depends only on the degree $n$.  Clearly the bound~\eqref{eq:Weil Incompl} is nontrivial 
only for $N \ge p^{1/2+\varepsilon}$ for some fixed $\varepsilon>0$. 
For smaller values of $N$ one can also use general bounds
on the Weyl sums, see, for example,~\cite[Theorem~5]{Bourg4}, which remain nontrivial as long 
as $N \ge p^{1/n+\varepsilon}$,  which is an optimal range. 
In the special case of monomials $f(X) = X^n$, that is, for {\it Gauss sums\/}, Kerr~\cite{Kerr} obtained 
a better bound in the middle range of $N$. Kerr and Macourt~\cite[Theorem~1.4]{KeMa} also considered
exponential sums over {\it generalised arithmetic progressions\/} rather than over  intervals. 

The multiplicative analogues of this problem, when, instead of interval, the sum is over a multiplicative 
subgroup  $\cG\subseteq \F_p^*$ has also been studied, however significantly less general results are known.
Again, the Weil bound~\eqref{eq:Weil}, using that 
\begin{equation}
\label{eq:Weil subgr}
S(\cG;f) = \frac{\tau}{p-1} \sum_{x=1}^{p-1} \ep\(f\(x^{(p-1)/\tau}\)\) = O(p^{1/2}),
\end{equation}
instantly gives a nontrivial result for subgroups of order 
$\tau=\#\cG \ge p^{1/2+\varepsilon}$.   

In the case of linear polynomials, the bound of~\cite[Theorem~2]{Shp} 
has started a series of further improvements which goes beyond this limitation on $\#\cG$, see~\cite{Bourg3,BGK,DBGGGSST,HBK,Kon,Shkr} 
and references therein. 

Significantly less is known in the case of non-linear polynomials $f \in \F_p[X]$. 
Until very recently, the only known 
approach to such bounds is of Bourgain~\cite{Bourg1},  which actually works in a much more general scenario
of exponential sums with linear combinations of several exponential functions. This result of Bourgain~\cite{Bourg1} 
gives a bound saving some power $p^\eta$ compared to the trivial bound, however the exponent $\eta$ is not explicit 
and an attempt to make it explicit in~\cite[Theorems~4 and~5]{Pop} has some problem. 
Indeed, the argument in~\cite{Pop} quotes incorrectly the result of~\cite[Corollary~16]{Shkr3}, which,  
after correcting, leads to exponentially smaller saving. 

More recently, the authors~\cite{OSV} have used a different approach to a similar problem, based on a bound
for the number of rational points on curves over finite fields from~\cite{Vol}, which in some cases is 
stronger than the use of classical Weil bound ~\cite[Equation~(5.7)]{Lor}, and estimate Kloosterman sums 
$$
K(\cG; a, b) = \sum_{x \in \cG} \ep\(ax + bx^{-1}\) 
$$
over a subgroup $\cG$ of order $\tau$ with $a,b\in \F_p^*$. More precisely, 
the Weil bound, in form given for example, in~\cite[Theorem~2]{MorMor},  instantly 
gives 
$$
K(\cG; a, b)  = \frac{\tau}{p-1} \sum_{x=1}^{p-1} \ep\(ax^{(p-1)/\tau} + bx^{-(p-1)/\tau}\) 
  = O\(p^{1/2}\)
$$
for  $a,b\in \F_p^*$, which becomes trivial for $\tau < p^{1/2}$.   

On the other hand, by~\cite[Corollary 2.9]{OSV} we have 
\begin{equation}
\label{eq:Kloost}
K(\cG; a, b) \ll \tau^{20/27}   p^{1/9},
\end{equation} 
where as usual the notations $U = O(V)$, $U \ll V$ and $ V\gg U$  
are equivalent to $|U|\leqslant c V$ for some positive constant $c$, 
which through out this work, all implied constants may depend only on $n$ (and
thus is absolute in~\eqref{eq:Kloost}).  Clearly, the bound~\eqref{eq:Kloost} is nontrivial
for $\tau\ge p^{3/7 + \varepsilon}$ for some fixed $\varepsilon>0$. 

We also note that several other bounds of exponential sums which rely on the result of~\cite{Vol}
and go beyond the Weil bound~\eqref{eq:Weil} 
have been given in~\cite{ShpVol}, see also~\cite{ShpWang}. 

 \subsection{Approach} 
It is important to recall that it has been shown in~\cite{Shp}, 
and used again in~\cite{HBK}, that in the case of linear polynomials, good bounds on the 4th moment
of the corresponding sums already allow us to  improve the Weil bound~\eqref{eq:Weil subgr}. 

However, 
for higher degree polynomials this is not sufficient and one needs strong bounds on at least the 6th moment. 
Hence, to obtain  nontrivial bounds 
for $S(\cG; f)$ for a given non-constant polynomial $f\in\F_p[X]$ and a small 
multiplicative  subgroup $\cG$ of $\F_p^*$,
we modify the ideas of our previous work~\cite{OSV} to investigate some high degree systems of 
polynomial equations. The main difficulty here is to study the {\it generic\/} absolute irreducibility of a certain 
family of curves and to be able to apply the result of~\cite[Theorem~(i)]{Vol}. We  then combine with the
 inductive approach of Bourgain~\cite{Bourg1}. 


More precisely,  the method of Bourgain~\cite{Bourg1} (see also the exposition in~\cite[Section~4.4]{Gar}) is inductive on
 the number of non-constant terms $r$ of the polynomial, and it  requires the case $r=1$ and $r=2$ as the basis
 of induction. For $r=1$ we can use the bound of Shkredov~\cite{Shkr}, or even one of the
 earlier bounds from~\cite{HBK,Shp}. So we start with obtaining a bound for binomial sums over 
 a subgroup, see Lemma~\ref{lem:Binom}, which is similar to~\eqref{eq:Kloost}.   
 The proof of  Lemma~\ref{lem:Binom} resembles that of our previous work~\cite[Theorem~2.7]{OSV} but requires 
 to investigate the absolute irreducibility of some special polynomials. Then we use this 
 bound to initiate the induction and derive our main result. 
 
  \subsection{Main result}
For a real $\varepsilon>0$ we set 
\begin{equation}
\label{eq:eta2}
\eta_1(\varepsilon) = \eta_2(\varepsilon) = \frac{7}{27} \varepsilon,
 \end{equation}
 and define the sequence $\eta_n(\varepsilon)$, $n =3,4, \ldots$,
 recursively as follows
\begin{equation}
\label{eq:etan}
 \eta_n(\varepsilon) =  \frac{7\varepsilon}{18 \kappa_n(\varepsilon)},
 \end{equation} 
 where
 \begin{equation}
\label{eq:kappan}
 \kappa_n(\varepsilon) = \rf{ \frac{n-2 -7\varepsilon/3}{2 \eta_{n-1}(\varepsilon)} + 3}.
  \end{equation} 

\begin{thm}
\label{thm:Gen}
Let  $f(X) \in \F_p[X]$ be a  polynomial of degree $n\ge 1$, and let $\cG\subseteq \F_p^*$ 
be a subgroup of order $\tau \ge p^{3/7 + \varepsilon}$ for some fixed  $\varepsilon>0$.
Then 
$$
S(\cG; f) \ll  \tau p^{-\eta_n(\varepsilon)},
$$
where  $\eta_n(\varepsilon)$ is defined by~\eqref{eq:eta2} and~\eqref{eq:etan}.
\end{thm}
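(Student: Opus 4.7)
The plan is to establish the bound by induction on the degree $n$, following the inductive architecture of Bourgain~\cite{Bourg1} but with sharper curve-based input from the authors' previous work~\cite{OSV}. The base cases are $n = 1$ and $n = 2$. For $n = 1$ the saving $\eta_1(\varepsilon) = 7\varepsilon/27$ follows from Shkredov's bound on Gauss sums over multiplicative subgroups~\cite{Shkr} (see also~\cite{HBK,Shp}), while the case $n = 2$ is precisely Lemma~\ref{lem:Binom}, the binomial-sum analogue of the Kloosterman estimate~\eqref{eq:Kloost} stated earlier. In both cases the saving $7\varepsilon/27$ arises because the relevant bound has the shape $\tau^{20/27} p^{1/9}$, which becomes nontrivial precisely in the range $\tau \ge p^{3/7 + \varepsilon}$ and yields a saving of $p^{-7\varepsilon/27}$ at the boundary.

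For the inductive step with $n \ge 3$, assume the claim holds up to degree $n - 1$ and write $f(X) = a_n X^n + g(X)$ with $\deg g \le n - 1$. Exploiting the multiplicative invariance $h\cG = \cG$, for every $h \in \cG$ we have
$$
S(\cG; f) = \sum_{x \in \cG} \ep\bigl(a_n h^n x^n + g(hx)\bigr).
$$
Averaging over $h \in \cG$ and applying H\"older's inequality with exponent $2\kappa_n(\varepsilon)$ expands $|S(\cG; f)|^{2\kappa_n}$ into a multiplicative energy over tuples $(h_1,\ldots,h_{\kappa_n}) \in \cG^{\kappa_n}$. For a generic tuple the inner sum is again a Weil sum over $\cG$, but now of a polynomial in which the top-degree contributions either cancel through the symmetric pairing (dropping the effective degree to at most $n - 1$) or combine into a binomial; in either case the inductive hypothesis or the base case $n = 2$ applies, giving a saving of $p^{-2(\kappa_n - 3)\eta_{n-1}(\varepsilon)}$ on the non-degenerate contribution. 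The degenerate tuples, where the leading coefficient of the resulting polynomial vanishes, are estimated trivially and contribute at most $p^{n - 2 - 7\varepsilon/3}$, which explains the shape of~\eqref{eq:kappan}.

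Balancing these two contributions and opening the $2\kappa_n$-th root delivers exactly the saving $\eta_n(\varepsilon) = 7\varepsilon/(18 \kappa_n(\varepsilon))$ prescribed by~\eqref{eq:etan}, with $\kappa_n$ chosen as the ceiling in~\eqref{eq:kappan} so that the non-degenerate gain strictly beats the degenerate cost. The main obstacle I foresee is the algebraic-geometric step: after the $2\kappa_n$-fold Cauchy--Schwarz expansion, one obtains a family of polynomial equations whose associated curves must be shown to be generically absolutely irreducible, so that the stronger-than-Weil curve bound~\cite[Theorem~(i)]{Vol} can be applied to the non-degenerate fibres. This is the direct higher-degree analogue of the irreducibility analysis already carried out in Lemma~\ref{lem:Binom}, but it must now be established uniformly in $n$ and in the symmetric parameters arising from the expansion; executing this analysis is the technical core of the argument, and the precise form of the recursion in~\eqref{eq:etan}--\eqref{eq:kappan} is a numerical reflection of the cost incurred at each stage.
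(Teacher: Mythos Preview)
Your outline is in the right spirit (induction with the binomial case as base, H\"older amplification over the group action), but the inductive mechanism you describe does not match the paper's, and the ``main obstacle'' you anticipate is not actually there.

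In the paper, the inductive step is run through two counting lemmas rather than a single $2\kappa_n$-fold H\"older. Lemma~\ref{lem:S and Q} gives
\[
|S(\cG;f)|^{2k\ell}\le p^r\,\tau^{2k\ell-2k-2\ell}\,Q_k(\vn;\cG)\,Q_\ell(\vn;\cG),
\]
and one then uses the \emph{trivial} inequalities $Q_k(\vn;\cG)\le Q_k(\vn_{r-1};\cG)$ and $Q_\ell(\vn;\cG)\le Q_\ell(\vn_2;\cG)$, i.e.\ one simply drops equations from the diagonal system~\eqref{eq:gen syst}. The degree reduction therefore comes from discarding the top equation in $Q_k$, not from any cancellation of leading terms in an inner Weil sum. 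With the choice $(k,\ell)=(\kappa_r(\varepsilon),3)$, the induction hypothesis feeds into Lemma~\ref{lem:Cond Bound} to give $Q_k(\vn_{r-1};\cG)\ll\tau^{2k}/p^{r-1}$, while $Q_3(\vn_2;\cG)\ll\tau^{11/3}$ is Corollary~\ref{cor:Q3}. Substituting these yields $\eta_r(\varepsilon)=7\varepsilon/(18\kappa_r(\varepsilon))$ directly; there is no separate ``degenerate tuple'' term to balance, and the ceiling in~\eqref{eq:kappan} is chosen so that $\xi=r-1$ in Lemma~\ref{lem:Cond Bound}.

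The crucial point you have backwards is the role of the algebraic geometry. The absolute-irreducibility analysis (Lemma~\ref{lem:AbsIrred}) and the curve bound (Lemma~\ref{lem:HighDeg}) are used \emph{once}, to prove Corollary~\ref{cor:Q3} on the sixth moment $Q_3(m,n;\cG)$ of \emph{binomial} sums. This single input is then recycled at every level of the induction through Lemma~\ref{lem:Cond Bound} (which pulls out $2k-6$ copies of the assumed sum bound and reduces the rest to $Q_3$). No new irreducibility statement ``uniform in $n$'' is required for the curves arising from the higher-order expansion; the inductive step is purely analytic. So the obstacle you foresee is not the technical core --- that core is already fully discharged in the $r=2$ case.
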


\begin{rem}
We have included the case of linear polynomials in Theorem~\ref{thm:Gen}, but as we have mentioned
in this case stronger results are available, see, for example,~\cite{Shkr}. 
\end{rem} 

\begin{rem}
It is obvious from our argument that if some information about the sparsity of the 
polynomial $f$ in  Theorem~\ref{thm:Gen} is known, then this can be accommodated 
in a stronger bound with $\eta_r(\varepsilon)$ instead of $\eta_n(\varepsilon)$
where $r\le n$ is the number of monomials in $f$.   
\end{rem} 

Since it may not be easy to understand the behaviour of  the sequence 
$\eta_n(\varepsilon)$ from~\eqref{eq:etan} and~\eqref{eq:kappan}, 
here we give some clarifying examples.

First we compute explicitly,
$$
\eta_3(\varepsilon) =  \frac{7\varepsilon}{18  \rf{27 \varepsilon^{-1} /14 - 3/2}}.
$$

We also notice that a simple inductive argument shows that for, say, $\varepsilon < 1/2$, 
for some absolute constant $c > 0$ we have
$$
 \eta_n(\varepsilon) \ge c\frac{(7\varepsilon/9)^{n-1}}{(n-2)!}. 
 $$
 (certainly for $\varepsilon > 1/2$ the Weil bound~\eqref{eq:Weil subgr} is much stronger).

 
 \section{Algebraic geometry background}
\label{sec:AG} 
\subsection{Rational points on absolutely irreducible curves}

Let $q$ be a prime power. It is well-known that by the Weil bound we have 
 \begin{equation}
\label{eq:Weil curve}
\{(x,y)\in \F_q^2:~ F(x,y) = 0\} = q+ O\(d^2 q^{1/2}\)
\end{equation} 
for any absolutely irreducible polynomial  $F(X,Y) \in \F_q[X,Y]$ of degree $d$
(see, for example,~\cite[Section~X.5, Equation~(5.2)]{Lor}).  One can see that~\eqref{eq:Weil curve} is a genuine asymptotic formula only 
for $d = O(q^{1/4})$ and is in fact weaker than the trivial bound 
$$
\{(x,y)\in \F_q^2:~ F(x,y) = 0\} =  O\(d q\)
$$
for $d \ge q^{1/2}$, which is exactly the range of our interest. 
To obtain nontrivial bounds for such large values of $d$  we 
recall the following result, which is a {combination of~\cite[Theorem~(i)]{Vol} 
with the Weil bound~\eqref{eq:Weil curve} (and the trivial 
inequality  $p + 2d^2p^{1/2} \le 3p$ for $d \le p^{1/4}$). 

 \begin{lem}
\label{lem:HighDeg} Let $p$ be prime and let 
$F(X,Y) \in \F_p[X,Y]$ be an absolutely irreducible polynomial of degree $d<p$.
Then
$$
\# \{(x,y) \in \F_p^2:~F(x,y) = 0\} \le 4d^{4/3} p^{2/3} + 3p. 
$$
\end{lem}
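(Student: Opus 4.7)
The proof plan is to dichotomize on the size of $d$ relative to $p^{1/4}$, applying two different curve-point estimates on either side of the threshold, and to observe that in each range one of the two summands in the target bound $4d^{4/3}p^{2/3}+3p$ already suffices on its own.

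In the low-degree regime $d \le p^{1/4}$, I would invoke the explicit form of the Weil bound~\eqref{eq:Weil curve} recorded as~\cite[Section~X.5, Equation~(5.2)]{Lor}, namely
$$
\#\{(x,y)\in \F_p^2:~F(x,y)=0\} \le p + 2d^2 p^{1/2}.
$$
Since $d\le p^{1/4}$ gives $d^2\le p^{1/2}$ and hence $2d^2 p^{1/2} \le 2p$, the count is at most $3p$. As $4d^{4/3}p^{2/3}\ge 0$, the claimed inequality follows in this range.

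In the high-degree regime $p^{1/4} < d < p$, where the Weil estimate has already fallen past the trivial bound $dp$, I would quote~\cite[Theorem~(i)]{Vol} directly; that result yields an upper bound of the shape $C\, d^{4/3} p^{2/3}$ on the same point count, and after unwinding Voloch's normalization one checks $C\le 4$, so that the main term alone furnishes the required bound in this regime.

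Concatenating the two cases proves Lemma~\ref{lem:HighDeg}. The only genuine input is the external result~\cite[Theorem~(i)]{Vol}; the rest is bookkeeping of constants, and the additive $3p$ term is there precisely to provide the slack needed to align the Lorenzini and Voloch normalizations smoothly across the transition $d\asymp p^{1/4}$. Consequently, I do not anticipate any real obstacle in the write-up beyond verifying the numerical constant coming out of~\cite[Theorem~(i)]{Vol}.
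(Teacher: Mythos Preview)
Your proposal is correct and mirrors the paper's own argument exactly: split at $d\le p^{1/4}$, use the Weil bound~\eqref{eq:Weil curve} (in the explicit form $\le p+2d^2p^{1/2}\le 3p$) there, and invoke~\cite[Theorem~(i)]{Vol} for larger $d$ to get the $4d^{4/3}p^{2/3}$ term. One small inaccuracy in your commentary: at $d\sim p^{1/4}$ the Weil bound has not yet ``fallen past the trivial bound $dp$'' (that only happens near $d\sim p^{1/2}$), but this remark plays no role in the proof itself.
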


\subsection{Absolute irreducibility of some polynomials}
To apply the bound of Lemma~\ref{lem:HighDeg}  we need to establish absolute irreducibility of polynomials relevant 
to our applications.  We present it in a general form for arbitrary finite fields, 
 as it may be useful for other applications.
Below we use a natural mapping of integers into elements of a finite field $\F_q$ of 
$q$ elements of characteristic $p$ via the reduction modulo $p$. 

\begin{lemma}
\label{lem:AbsIrred}
Given integers $n>m \ge 1$ with  $\gcd(m,n)=1$, there exists a non-zero polynomial  $\Delta(U,V)\in \Z[U,V]$, 
such that for every prime power $q$ and
positive integer $s$ with $\gcd(s,q)=1$ and $A,B \in \F_q$ with $\Delta(A,B) \ne 0$,
the polynomial
$$ 
F(X,Y) = (X^{sm} + Y^{sm} -A)^n - (X^{sn} + Y^{sn} -B)^m 
\in \F_q[X,Y]
$$
is absolutely irreducible.
\end{lemma}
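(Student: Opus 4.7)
My plan is to factor $\Delta$ as a product $\Delta_0 \cdot \Delta_1 \in \Z[U,V]$: the factor $\Delta_0$ handles absolute irreducibility of the base polynomial
\[
G_{A,B}(u,v) = (u^m + v^m - A)^n - (u^n + v^n - B)^m,
\]
while $\Delta_1$ ensures that the Kummer-type substitution $(u,v) \mapsto (X^s, Y^s)$ preserves absolute irreducibility, given $\gcd(s,q) = 1$.

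For the first factor I pass to the universal polynomial $\tilde G(u,v) = (u^m + v^m - U)^n - (u^n + v^n - V)^m$ over $K = \overline{\Q(U,V)}$. Once $\tilde G$ is shown absolutely irreducible over $K$, a standard specialisation principle of Noether type produces $\Delta_0 \in \Z[U,V]$ such that $G_{A,B}$ is absolutely irreducible over $\overline{k}$ whenever $\Delta_0(A,B) \neq 0$ in $k$, uniformly in the characteristic. To prove absolute irreducibility of $\tilde G$ itself, I realise its zero locus as $\psi^{-1}(D)$, where $\psi(u,v) = (u^m + v^m - U,\ u^n + v^n - V)$ and $D \subset \A^2$ is the curve $X^n = Y^m$, which is absolutely irreducible and parametrised by $t \mapsto (t^m, t^n)$ thanks to $\gcd(m,n) = 1$. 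Lifting this parametrisation, the variety
\[
S = \bigl\{ (u,v,t) \in \A^3 :\ u^m + v^m - t^m = U,\ u^n + v^n - t^n = V \bigr\}
\]
projects bijectively onto $\{\tilde G = 0\}$ by $(u,v,t) \mapsto (u,v)$, because $\gcd(m,n) = 1$ recovers $t$ uniquely from $(t^m,t^n)$. So it suffices to show $S$ is geometrically integral, equivalently that the generic fibre of the dominant morphism $\Pi: \A^3 \to \A^2$, $(u,v,t) \mapsto (u^m + v^m - t^m,\ u^n + v^n - t^n)$, is absolutely irreducible. I would do this by exhibiting a specialisation $(U_0,V_0)$ at which $S_{U_0,V_0}$ is demonstrably irreducible --- by analysing the degree-$mn$ cover $S \to \A^1_t$, $(u,v,t) \mapsto t$, and showing its monodromy is transitive --- and then concluding via upper semicontinuity of the number of geometric components in the family. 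Care is required, since degenerate specialisations such as $(U_0,V_0) = (0,0)$ introduce spurious components $\{u = t,\ v = 0\}$ and $\{v = t,\ u = 0\}$.

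For the $s$-substitution, view $\{F = 0\}$ as the preimage of $C = \{G_{A,B} = 0\}$ under the cover $\pi: (X,Y) \mapsto (X^s, Y^s)$, which is Galois of group $\mu_s \times \mu_s$ over $\overline{k}$ when $\gcd(s,q) = 1$. By Kummer theory, this preimage remains absolutely irreducible iff the classes of $u|_C$ and $v|_C$ generate $(\Z/r)^2$ in $k(C)^\times / (k(C)^\times)^r$ for every prime $r \mid s$. The most obvious obstruction --- $G_{A,B}$ being a polynomial in $(u^d, v^d)$ for some $d > 1$ dividing $s$ --- is ruled out automatically, since the monomials in $G_{A,B}$ have exponents divisible only by $m$ or by $n$, forcing $d \mid \gcd(m,n) = 1$. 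The remaining Kummer-type conditions translate into polynomial inequalities on $(A,B)$ and form the factor $\Delta_1$.

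The main obstacle is the absolute irreducibility of $\tilde G$ over $K$: concretely, finding a specialisation $(U_0,V_0)$ at which transitivity of the monodromy of $S \to \A^1_t$ can be verified directly. Once this is in hand, the specialisation argument yielding $\Delta_0$ and the Kummer-theoretic analysis yielding $\Delta_1$ follow by standard techniques.
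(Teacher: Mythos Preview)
Your reduction to the space curve $S = \{u^m+v^m-t^m = U,\ u^n+v^n-t^n = V\}$ is exactly what the paper does (with $Z$ in place of $t$): the paper introduces $Z$ via the system $X^m+Y^m-A=Z^m$, $X^n+Y^n-B=Z^n$, and uses the same $\gcd(m,n)=1$ trick to see that projection to the $(X,Y)$-plane is a bijection onto $\{F=0\}$. So the skeleton of your argument matches the paper's.

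Where you diverge is in how irreducibility of $S$ (for generic parameters) is established. You propose a monodromy/specialisation argument for the cover $S\to\A^1_t$ and admit this is the main obstacle, leaving it open. The paper takes a more direct route: it proves that for $(A,B)$ off an explicitly described hypersurface the curve $S$ is \emph{smooth}, by writing down the two gradients, imposing linear dependence, and checking case-by-case that each possibility forces a polynomial relation among $A,B$ (these relations are collected as divisors of $\Delta$). Then, since the bijective projection $S\to\{F=0\}$ can only create \emph{unibranch} (cuspidal) singularities, while any reducible plane curve has a multi-branch point where two components meet in $\PP^2$, the plane curve $F=0$ must be irreducible. This avoids the search for a good specialisation entirely and produces the exceptional set explicitly.

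For the $s$-extension, your Kummer-theoretic formulation is correct but abstract. The paper makes it concrete in exactly the way your plan needs: it locates a point $P=(0,y_0)$ on the base curve at which $u$ has a \emph{simple} zero (verified by showing the discriminant of $G_{A,B}(0,Y)$ is generically nonzero --- this contributes another factor to $\Delta$), so $U^s=u$ is irreducible over the function field by the standard Kummer criterion; then the same is done for $v$ on the resulting cover. This is precisely the check that $u,v$ are not $r$-th powers that your $\Delta_1$ is meant to encode, but with the polynomial condition actually written down.

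In short: your plan is viable and shares the key construction with the paper, but the central irreducibility step is left as an acknowledged gap. The paper's smoothness-plus-unibranch argument closes it cleanly without any monodromy computation, and you should adopt it.
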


\begin{proof} 
Let us begin by considering the case $s=1$.

We introduce a new variable $Z$ and note that for $s=1$ the curve $F=0$ is 
isomorphic to 
\begin{equation}
\label{eq:system}
  \begin{cases}
& X^{m} + Y^{m} -A = Z^m \\
&X^{n} + Y^{n} -B = Z^n. 
  \end{cases}
\end{equation}

The isomorphism is projection to the $X,Y$-plane, with inverse
given by 
$$(X,Y)\mapsto (X,Y,\(X^{m} + Y^{m} -A\)^u\(X^{n} + Y^{n} -B\)^v)$$ 
with some fixed integers $u$ and $v$ satisfying
$$
mu+nv=1.
$$

The two equations in~\eqref{eq:system} 
have gradients 
$$
m(X^{m-1},Y^{m-1},-Z^{m-1}) \mand n (X^{n-1},Y^{n-1},-Z^{n-1}),
$$
respectively. 
For the curve defined by the system~\eqref{eq:system}
 to be singular at a point $(x,y,z)$ the corresponding gradients have to be linearly
 dependent. This condition,
when fed back into~\eqref{eq:system} gives a relation between $A,B$. 

More explicitly we proceed as follows. 

First, we seek the polynomial $\Delta$ in the form 
\begin{equation}
\label{eq:Cond 1}
\Delta = mn \Delta_0
\end{equation}
with some $\Delta_0(U,V)\in \Z[U,V]$,  thus $\Delta(A,B) \ne 0$, guarantees that  $mn \ne 0$ in $\F_q$.

If $x=y=0$, then $z^m = -A, z^n = -B$, from~\eqref{eq:system} and so $(-A)^n = (-B)^m$.
Thus we also request 
\begin{equation}
\label{eq:Cond 2}
\((-A)^n - (-B)^m\) \mid \Delta(A,B).
\end{equation}

The possibilities $x=z=0$ and $y=z=0$ can be similarly treated and lead to the 
requirement 
\begin{equation}
\label{eq:Cond 3}
\(A^n - B^m\) \mid \Delta(A,B).
\end{equation}

If $x=0$ and  $yz \ne 0$, then the gradient condition 
gives $y = \zeta z$ with some $\zeta^{n-m}=1$
and~\eqref{eq:system} gives 
$$
\(\zeta^m-1\)z^m = A \mand   \(\zeta^n-1\)z^n = B.
$$
Therefore,
$\(\zeta^n-1\)^m A^n = \(\zeta^m-1\)^n B^m$, and we also request 
\begin{equation}
\label{eq:Cond 4}
\prod_{\zeta^{n-m}=1}  \(\(\zeta^n-1\)^m A^n - \(\zeta^m-1\)^n B^m\) \mid \Delta(A,B), 
\end{equation}  
where the product is taken  over all   roots of unity $\zeta$ with 
$\zeta^{n-m}=1$.

If $xyz \ne 0$, then we see from~\eqref{eq:grad}  that there has to be 
a constant $\lambda$ with 
\begin{equation}
\label{eq:grad}
x^{n-m} = y^{n-m} = z^{n-m} = \lambda,
\end{equation}
so we can write $y = \zeta_1 x,$ and $z = \zeta_2 x$
with $\zeta_1^{n-m}= \zeta_2^{n-m}=1$,  leading to
\begin{equation}
\label{eq:Cond 5}
\prod_{\substack{\zeta_1^{n-m}= 1\\
\zeta_2^{n-m}=1}}  \(\(1+\zeta_1^n-\zeta_2^n\)^m A^n - \(1+\zeta_1^m-\zeta_2^m\)^n B^m  \) \mid \Delta(A,B),
\end{equation}
where the product is taken  over all pairs of roots of unity $(\zeta_1, \zeta_2)$ with 
$\zeta_1^{n-m}= \zeta_2^{n-m}=1$.

A similar argument works at infinity and shows that, for generic $A$ and $B$, 
the curve is smooth.

Given $X,Y$, there is a unique choice of $Z$ satisfying~\eqref{eq:system},
so the projection to the $X,Y$ does not acquire singularities from 
distinct points in three-space. The only singularities are cusps coming
from a vertical tangent line which are unibranched (since the curve in three-space is smooth). However, a reducible
plane curve has singular points with more than one branch wherever two components
meet. Hence~\eqref{eq:system} is an irreducible curve. (See~\cite[Chapter~16]{Kunz} for a detailed exposition
of branches of curve singularities).

We have shown that, for $s=1$, the polynomial $F$ is absolutely irreducible. We consider the algebraic curve $\cC$
which is a non-singular projective model of $F=0$ (still with $s=1$). 

Suppose $n>m>1$. We now use an argument similar to~\cite[Lemma~4.3]{OSV}.

For a point $P=(0,y_0)$ on the curve $F=0$ we have 
$$\frac{\partial F}{\partial X} (0,y_0) = 0.
$$
Next we show that the point $P=(0,y_0)$ is a simple point on the curve $F=0$ with 
\begin{equation}
\label{eq:Nonvanish}
\frac{\partial F}{\partial Y}(0,y) \ne 0.
\end{equation}
(for generic $A$ and $B$).
It now suffices to show that the discriminant $D(A,B)$ of $F(0,Y)$ (as a polynomial in $A,B$) is
not identically zero. 

Taking $A=1$ and $B = 0$,  the polynomial $F$ specialises to 
$$
(Y^m-1)^n - Y^{mn} = \prod_{\xi^n = 1} (Y^m(1-\xi) -1),
$$ which has a non-zero discriminant as
each factor $(Y^m(1-\xi) -1)$ is square-free and these factors are relatively prime. 
Thus we impose the condition 
\begin{equation}
\label{eq:Cond 6}
D(A,B) \mid \Delta(A,B).
\end{equation}

It remains to choose $ \Delta(A,B) \in \Z[A,B]$ as an arbitrary fixed polynomial
which depends only on $m$ and $n$ and satisfies the divibility conditions~\eqref{eq:Cond 1}, \eqref{eq:Cond 2},
\eqref{eq:Cond 3}, \eqref{eq:Cond 4}, \eqref{eq:Cond 5}
and~\eqref{eq:Cond 6}.



We now consider the case of arbitrary $s\ge 1$ (and $n>m>1$).

So $P$ corresponds to a place of $\cC$. We consider the functions $x,y$ on $\cC$ that satisfy the equation $F(x,y)=0$. The function $x$ has a simple zero at $P$, hence is not a power of another function on $\cC$.
It follows from~\cite[Proposition~3.7.3]{Stichtenoth}, that the equation $U^s = x$ is irreducible over the function field of $\cC$
and defines a cover $\cD$ of $\cC$. Now, consider any point $Q$ on $\cD$ above a point $(x_0,0)$ on $\cC$. Since $x$ is not zero at $(x_0,0)$ (for generic $A,B$), 
the curve $\cD$ is locally isomorphic to $\cC$ near $Q$ and we conclude, as above, that the function $y$ on $\cD$ has a simple zero at $Q$
and, in particular, is not a power of another function on $\cD$. Again, we conclude that 
the equation $W^s = y$ is irreducible over the function field of $\cD$ and defines a cover $\cE$ of $\cD$. 
In other words, $F(U^s,W^s)=0$ is an absolutely irreducible equation defining the curve $\cE$, 
which concludes the case $n>m>1$.  

We are now  left with $n>m=1$.
It is still true that a point $P=(0,y_0)$ is a simple point on the curve $F=0$ with~\eqref{eq:Nonvanish} 
(for generic $A,B$). 
Indeed, if
$$
0 = \frac{\partial F}{\partial Y}(0,y)  (0,y_0) = n(y_0^{n-1} -(y_0-B)^{n-1}), 
$$
then combining this with
$$
0=F(0,y_0) = -A+y_0^{n} -(y_0-B)^{n} ,
$$
we derive
$$
0=  -A+y_0^{n} -(y_0-B)^{n} =  -A+y_0^{n} -(y_0-B) y_0^{n-1} =    -A+B y_0^{n-1} .
$$
Hence 
$$
0 = By_0^{n-1} -B(y_0-B)^{n-1} =  A -B(y_0-B)^{n-1}.
$$
Considering the resultant of $B(Y-B)^{n-1} -A$ and $BY^{n-1} - A$  (which clear does not vanish 
for $A=1$ and $B=0$ and thus is a nontrivial polynomial) 
gives a contradiction for generic $A,B$.  
The proof then continues as before in the case $n>m>1$. 
\end{proof}

 \section{Exponential sums and systems of diagonal equations}

 \subsection{Exponential sums and the number of solutions to some systems of equations}
 \label{sec:Exp2Eq}
 Here we collect some previous results on exponential sums and 
 also about links between these bounds and the number of solutions to 
 some congruences.

Given an integer vector $\vn = \(n_1, \ldots, n_r\)\in \Z^r$ 
 with $n_r> \ldots > n_1 \ge 1$, and a subgroup $\cG \subseteq \F_p^*$, 
we denote by   $Q_{k}(\vn;\cG)$
the number of solutions to the 
following system of $r$ equations
\begin{equation}
\label{eq:gen syst}
\begin{split}
 g_1^{n_i}+\ldots + g_k^{n_i} & = g_{k+1}^{n_i}+\ldots + g_{2k}^{n_i}, \qquad i =1, \ldots, r, \\
& g_1, \ldots ,  g_{2k} \in \cG.
\end{split}
 \end{equation}

The following link between $S(\cG; f)$  and  $Q_{k}(\vn;G)$ is a slight variation 
of several previous results of a similar spirit.

  \begin{lem}
\label{lem:S and Q} Let
$$
 f(X)  = a_r X^{n_r} + \ldots + a_1 X^{n_1}  \in \F_p[X]
 $$
 with nonzero coefficients $a_1, \ldots, a_r \in \F_p^*$ and integer exponents $n_r> \ldots > n_1 \ge 1$.
Then, for  a subgroup $\cG \subseteq \F_p^*$ and for any positive integers $k$ and $\ell$, we have 
$$
\left|S(\cG; f)\right|^{2k\ell} \le p^r \tau^{2k\ell - 2k -2 \ell} Q_{k}(\vn;\cG) Q_{\ell}(\vn;\cG).
$$
\end{lem}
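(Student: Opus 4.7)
My plan would be to pass from the exponential sum $S = S(\cG; f)$ to character sums over $\F_p^r$ by orthogonality, and then apply Cauchy--Schwarz. For any integer $m \ge 1$, collecting the $m$-tuples $(g_1, \ldots, g_m) \in \cG^m$ according to their vector of power sums
$$
\vec{h} = \left(\sum_{i=1}^m g_i^{n_1}, \ldots, \sum_{i=1}^m g_i^{n_r}\right) \in \F_p^r
$$
gives the Fourier identity
$$
S^m = \sum_{\vec h \in \F_p^r} r_m(\vec h) \, \ep(\vec{a} \cdot \vec h),
$$
where $\vec{a} = (a_1, \ldots, a_r)$ and $r_m(\vec h)$ denotes the number of $m$-tuples realising $\vec h$. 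A direct Cauchy--Schwarz over the (at most $p^r$) elements in the support of $r_m$ yields the basic inequality
$$
|S|^{2m} = |S^m|^2 \le p^r \sum_{\vec h} r_m(\vec h)^2 = p^r Q_m(\vn; \cG).
$$

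Specialising this to $m = k$ and $m = \ell$ produces the pair of bounds $|S|^{2k} \le p^r Q_k$ and $|S|^{2\ell} \le p^r Q_\ell$. I would then factor
$$
|S|^{2k\ell} = |S|^{2k} \cdot |S|^{2\ell} \cdot |S|^{2k\ell - 2k - 2\ell}
$$
(the residual exponent is non-negative whenever $(k-1)(\ell-1) \ge 1$; the degenerate cases $\min(k, \ell) = 1$ can be handled directly or absorbed into the trivial bound), estimate the last factor by $\tau^{2k\ell - 2k - 2\ell}$ using $|S| \le \tau$, and multiply to arrive at an inequality of the announced shape.

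The step I expect to require the most care is the accounting for the powers of $p$: applying the Cauchy--Schwarz bound twice independently produces $p^{2r}$, whereas the lemma has only one factor of $p^r$. The natural way to save the extra $p^r$ is to merge the two Cauchy--Schwarz applications into a single one acting on $|S^k \overline{S^\ell}|^2 = |S|^{2(k+\ell)}$, expanded as the squared Fourier coefficient on $\F_p^r$ of the ``difference convolution'' counting pairs $(\vec{u}, \vec{w}) \in \cG^k \times \cG^\ell$ with a prescribed power-sum difference. Controlling the $\ell^2$-mass of this convolution in terms of $Q_k Q_\ell$ (up to appropriate $\tau$-factors) via a pointwise Cauchy--Schwarz estimate combined with an $\ell^1$-mass bound, or via a Young-type inequality, and then matching the resulting $\tau$-exponent with the stated $2k\ell - 2k - 2\ell$, is the delicate technical point where I would expect most of the bookkeeping to concentrate.
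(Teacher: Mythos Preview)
Your basic identity $|S|^{2m}\le p^r Q_m(\vn;\cG)$ is correct, and you correctly identify that multiplying the $m=k$ and $m=\ell$ cases produces an unwanted $p^{2r}$. However, the proposed repair via the difference convolution does not close this gap. Writing $c=r_k*\tilde r_\ell$ and using Plancherel on $\F_p^r$, one finds
\[
\sum_{\vec d}c(\vec d)^2=\frac{1}{p^r}\sum_{\xi\in\F_p^r}|\sigma(\xi)|^{2k+2\ell},
\qquad \sigma(\xi)=\sum_{g\in\cG}\ep\!\Bigl(\sum_i\xi_i g^{n_i}\Bigr),
\]
so that Cauchy--Schwarz over the support of $c$ merely reproduces the basic bound $|S|^{2(k+\ell)}\le p^r Q_{k+\ell}(\vn;\cG)$; it does not yield a product $Q_kQ_\ell$ with a single $p^r$. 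Likewise, Young's inequality gives only $\|c\|_2^2\le \tau^{2k}Q_\ell$ (or the symmetric version), which after multiplying by $|S|^{2k\ell-2k-2\ell}\le\tau^{2k\ell-2k-2\ell}$ reduces to the one-sided estimate $|S|^{2k\ell}\le p^r\tau^{2k\ell-2\ell}Q_\ell$, again not the stated bound.

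The missing idea is that your argument never uses that $\cG$ is a \emph{group}. The paper exploits the dilation invariance $S=\sum_{g\in\cG}\ep(f(hg))$ for every $h\in\cG$: summing over $h$ gives the exact identity
\[
\tau\,S^k=\sum_{\lambda\in\F_p^r}J_k(\lambda)\sum_{h\in\cG}\ep\!\Bigl(\sum_i\lambda_i h^{n_i}\Bigr),
\]
where $J_k(\lambda)$ counts $k$-tuples in $\cG$ with prescribed power sums. Now a single H\"older inequality (with exponents $\tfrac{2\ell}{2\ell-2},\,2\ell,\,2\ell$) separates $\sum_\lambda J_k=\tau^k$, $\sum_\lambda J_k^2=Q_k$, and $\sum_\lambda\bigl|\sum_h\ep(\cdot)\bigr|^{2\ell}=p^rQ_\ell$. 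The single $p^r$ arises from this last orthogonality relation, used once; the asymmetry between $k$ and $\ell$ is created by the averaging over $h$, not by any convolution trick on $\F_p^r$.
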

\begin{proof}
We start by noticing that, for any $h \in \cG$, we have
 $$
S(\cG;f)=  \sum_{g  \in \cG}   \ep\(a_1 (hg)^{n_1} + \ldots + a_r (hg)^{n_r}\).
$$
Hence, for any integer $k \ge 1$ we have
\begin{align*}
\tau\(S(\cG;f)\)^k &=  \sum_{h  \in \cG} \(\sum_{g  \in \cG}   \ep\(a_1 (hg)^{n_1} + \ldots + a_r (hg)^{n_r}\)\)^k\\
& = \sum_{h  \in \cG} \sum_{\lambda_1, \ldots, \lambda_r \in \F_p}   J_k(\lambda_1,  \ldots, \lambda_r)
  \ep\(\lambda_1 h^{n_1} + \ldots + \lambda_r h^{n_r}\), 
  \end{align*}
 where $J_k(\lambda_1,  \ldots, \lambda_r)$ is the number of solutions to the following system of 
 equations:
 \begin{align*}
a_i\( g_1^{n_i}+\ldots + g_k^{n_i}\)& =  \lambda_i, \qquad i =1, \ldots, r, \\
 g_1&, \ldots ,  g_{k} \in \cG.
\end{align*}
Hence, changing the order of summations, we obtain
$$
\tau\left|S(\cG;f)\right|^k   \le  \sum_{\lambda_1, \ldots, \lambda_r \in \F_p}   J_k(\lambda_1,  \ldots, \lambda_r)
 \left| \sum_{h  \in \cG} \ep\(\lambda_1 h^{n_1} + \ldots + \lambda_r h^{n_r}\) \right|. 
$$
Observe that since $a_1, \ldots, a_r \ne 0$, we have 
 \begin{align*}
& \sum_{\lambda_1, \ldots, \lambda_r \in \F_p}   J_k(\lambda_1,  \ldots, \lambda_r) = \tau^k, \\
& \sum_{\lambda_1, \ldots, \lambda_r \in \F_p}   J_k(\lambda_1,  \ldots, \lambda_r)^2 = 
Q_{k}(\vn;G), 
\end{align*}
where $\vn = \(n_1, \ldots, n_r\)$.

Writing 
$$
 J_k(\lambda_1,  \ldots, \lambda_r) =  J_k(\lambda_1,  \ldots, \lambda_r)^{1-1/\ell} \(J_k(\lambda_1,  \ldots, \lambda_r)^2\)^{1/2\ell} 
 $$
 and applying the H{\"o}lder inequality, we derive 
 \begin{align*}
\tau^{2\ell}\(S(\cG;f)\)^{2k\ell} & \le \( \sum_{\lambda_1, \ldots, \lambda_r \in \F_p}   J_k(\lambda_1,  \ldots, \lambda_r) \)^{2\ell-2}\\
& \qquad  \quad 
 \sum_{\lambda_1, \ldots, \lambda_r \in \F_p}   J_k(\lambda_1,  \ldots, \lambda_r)^2  \\
  &  \qquad  \qquad  \quad  \sum_{\lambda_1, \ldots, \lambda_r \in \F_p}  
 \left| \sum_{h  \in \cG} \ep\(\lambda_1 h^{n_1} + \ldots + \lambda_r h^{n_r}\) \right|^{2\ell}\\
 & = p^{r} \tau^{k(2\ell-2)} Q_{k}(\vn;G) Q_{\ell}(\vn;G)
  \end{align*}
  which concludes the proof. 
\end{proof}

We now establish a link in the opposite direction, that is, from bounds on exponential sums 
to bounds on $ Q_{k}(\vn; \cG)$. 

 \begin{lemma}
\label{lem:Cond Bound} Let $r \ge 2$ and let $\varepsilon\ge 0$ be fixed.
  Assume that there is some  fixed $\eta> 0$ (depending only on $r$ and $\varepsilon$) such that for 
all nonzero vectors $\(a_1, \ldots, a_r\)\in \F_p^r$ and for a vector $\vn = \(n_1, \ldots, n_r\)\in \Z^r$ with $n_r> \ldots > n_1 \ge 1$,  for a subgroup $\cG \subseteq \F_p^*$ of order $\tau$
with 
\begin{equation}
\label{eq:medium tau}
 p^{3/7+\varepsilon} \le \tau \le p^{3/4}
 \end{equation}
 we have 
\begin{equation}
\label{eq:Small sum}
 \sum_{g \in \cG} \ep\(a_1 g^{n_1} + \ldots + a_r g^{n_r}\) \ll \tau p^{-\eta}.
 \end{equation}
 Then for any integer $k\ge 3$  we have 
$$
 Q_{k}(\vn; \cG) \ll \tau^{2k} p^{-\xi} 
 $$
 where $\xi = \min\{r, \eta(2k-6) +1 +7\varepsilon/3\}$. 
\end{lemma}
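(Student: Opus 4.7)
The plan combines the Fourier-analytic expansion of $Q_k(\vn;\cG)$ with the hypothesis~\eqref{eq:Small sum}, reducing the problem to a base inequality on $Q_3(\vn;\cG)$ that is then proved by the algebraic-geometric results of Section~\ref{sec:AG}.

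First, by orthogonality of additive characters,
$$
p^r Q_k(\vn;\cG) = \sum_{\va \in \F_p^r} \left|S(\cG;\va)\right|^{2k}, \qquad S(\cG;\va) = \sum_{g \in \cG}\ep(a_1 g^{n_1}+\ldots+a_r g^{n_r}).
$$
The contribution of $\va = \mathbf{0}$ is $\tau^{2k}$, producing the $\tau^{2k} p^{-r}$ piece of the final bound (the $\xi = r$ alternative in the minimum). For $\va \ne \mathbf{0}$, I would factor $|S|^{2k} = |S|^{2k-6}\cdot|S|^6$, apply the hypothesis to the first factor, and complete the sum of $|S|^6$ over all $\va$ to recognise $p^{r} Q_3(\vn;\cG)$:
$$
\sum_{\va \ne \mathbf{0}} \left|S(\cG;\va)\right|^{2k} \le \Bigl(\max_{\va \ne \mathbf{0}}\left|S(\cG;\va)\right|\Bigr)^{2k-6}\sum_{\va \in \F_p^r}\left|S(\cG;\va)\right|^{6} \ll (\tau p^{-\eta})^{2k-6}\, p^r\, Q_3(\vn;\cG).
$$
This reduces the task to controlling $Q_3(\vn;\cG)$.

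The crux, and main obstacle, is the base inequality
$$
Q_3(\vn;\cG) \ll \tau^{6}\, p^{-1-7\varepsilon/3}.
$$
To prove this I would reduce without loss of generality to $r = 2$ (adding equations only shrinks $Q_3$), fix four of the six summation variables in the definition of $Q_3$, and count pairs $(g_1,g_2) \in \cG^2$ satisfying a two-equation system $g_1^{n_i} + g_2^{n_i} = A_i$ for $i = 1, 2$, with $n_1 < n_2$ which may be taken coprime after pulling out any common factor. Lifting via $g_j = u_j^s$ with $s = (p-1)/\tau$ converts this to an $\F_p^2$-point count on the plane curve $F(U,V)$ of Lemma~\ref{lem:AbsIrred}, which is absolutely irreducible outside the codimension-one locus $\Delta(A_1,A_2) = 0$; Lemma~\ref{lem:HighDeg} applied to this curve of degree $\approx p/\tau$ and division by $s^{2}$ produces the $\cG^2$-fibre count. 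Summing over the fixed quadruple, treating the exceptional locus $\Delta(A_1,A_2) = 0$ as a negligible lower-order term, and combining with the lower bound $\tau \ge p^{3/7+\varepsilon}$, equivalently $\tau^{7/3} \ge p^{1+7\varepsilon/3}$ (which is how the exponent $1 + 7\varepsilon/3$ enters $\xi$), yields the required $Q_3$-bound and completes the proof with $\xi = \min\{r,\,(2k-6)\eta + 1 + 7\varepsilon/3\}$.
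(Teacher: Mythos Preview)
Your overall architecture matches the paper's proof exactly: expand $Q_k$ by orthogonality, split off $\va=\mathbf{0}$, peel off $|S|^{2k-6}$ via the hypothesis~\eqref{eq:Small sum}, and reduce to a bound on $Q_3$; the reduction to $r=2$ and the appeal to Lemmas~\ref{lem:AbsIrred} and~\ref{lem:HighDeg} are also the right ingredients.

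The gap is in your execution of the $Q_3$ bound. You fix \emph{four} of the six variables and bound the remaining pairs $(g_1,g_2)$ by embedding the zero-dimensional system $g_1^{n_i}+g_2^{n_i}=A_i$ ($i=1,2$) into the curve $F=0$ of Lemma~\ref{lem:AbsIrred}. Carrying your count through, Lemma~\ref{lem:HighDeg} gives $\ll s^{4/3}p^{2/3}+p$ points on $F$, division by $s^2$ gives $\ll \tau^{2/3}+\tau^2/p$ pairs in $\cG^2$, and summing over $\tau^4$ quadruples yields
\[
Q_3 \ll \tau^{14/3}+\tau^6/p.
\]
The term $\tau^{14/3}$ satisfies $\tau^{14/3}\le \tau^6 p^{-1-7\varepsilon/3}$ only when $\tau^{4/3}\ge p^{1+7\varepsilon/3}$, i.e.\ $\tau\ge p^{3/4+\cdots}$, which is precisely the \emph{opposite} of the range~\eqref{eq:medium tau}. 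So the claimed base inequality $Q_3\ll\tau^6 p^{-1-7\varepsilon/3}$ does not follow from your count.

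The paper instead fixes only \emph{three} variables (see Lemma~\ref{lem:T3} and Corollary~\ref{cor:Q3}), obtaining a genuinely two-equation, three-unknown system, and then eliminates the third unknown to reach the curve $F$; the point count on $F$ is multiplied by $O(s)$ for the recovered variable. This produces $Q_3\ll\tau^{11/3}+\tau^5/p$, and now $\tau^{11/3}=\tau^6/\tau^{7/3}\le\tau^6 p^{-1-7\varepsilon/3}$ is exactly the content of $\tau\ge p^{3/7+\varepsilon}$. In short, $F$ is the elimination curve of a three-variable system, and fixing an extra variable before invoking it throws away a factor of $\tau$ that the argument cannot afford.
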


\begin{proof}
Using the orthogonality of characters, we write 
\begin{equation}
\label{eq:Qkmn}
\begin{split}
 Q_{k}(\vn&, \cG)\\
 & = \frac{1}{p^r}  \sum_{a_1, \ldots, a_r \in \F_p} \left|\sum_{g  \in \cG}  \ep\(a_1 g^{n_1} + \ldots + a_r g^{n_r}\) \right|^{2k}\\
 & = \frac{\tau^{2k}}{p^r} +  \frac{1}{p^r}  \sum_{\substack{a_1, \ldots, a_r\in \F_p\\ \(a_1, \ldots, a_r\)\  \ne \mathbf{0}}}
  \left|\sum_{g  \in \cG}   \ep\(a_1 g^{n_1} + \ldots + a_r g^{n_r}\) \right|^{2k}.
\end{split}
 \end{equation}
 
Now, using our assumption~\eqref{eq:Small sum}  we obtain 
\begin{align*}
  \frac{1}{p^r}  \sum_{\substack{a_1, \ldots, a_r\in \F_p\\ \(a_1, \ldots, a_r\)\  \ne \mathbf{0}}} &
  \left|\sum_{g  \in \cG}   \ep\(a_1 g^{n_1} + \ldots + a_r g^{n_r}\) \right|^{2k} \\
  &\ll
  \frac{ \(\tau   p^{-\eta} \)^{2k-6}}{p^r}  \sum_{\substack{a_1, \ldots, a_r\in \F_p\\ \(a_1, \ldots, a_r\)\  \ne \mathbf{0}}}
  \left|\sum_{g  \in \cG}   \ep\(a_1 g^{n_1} + \ldots + a_r g^{n_r}\) \right|^{6} .
  \end{align*}
  Dropping the restriction $\(a_1, \ldots, a_r\)\  \ne \mathbf{0}$ from the summation, we now obtain
  \begin{align*}
  \frac{1}{p^r}  \sum_{\substack{a_1, \ldots, a_r\in \F_p\\ \(a_1, \ldots, a_r\)\  \ne \mathbf{0}}} &
  \left|\sum_{g  \in \cG}   \ep\(a_1 g^{n_1} + \ldots + a_r g^{n_r}\) \right|^{2k} \\
  &\ll
  \frac{ \(\tau   p^{-\eta} \)^{2k-6}}{p^r}  \sum_{a_1, \ldots, a_r\in \F_p}
  \left|\sum_{g  \in \cG}   \ep\(a_1 g^{n_1} + \ldots + a_r g^{n_r}\) \right|^{6} \\
  & =  \(\tau   p^{-\eta} \)^{2k-6}   Q_{3}(\vn; \cG) .
  \end{align*}
  Since $r \ge 2$, we obviously have 
  $$
  Q_{3}(\vn; \cG) \le   Q_{3}(n_1,n_2; \cG) .
  $$
Thus applying  Corollary~\ref{cor:Q3} in  Section~\ref{sec;syst eq}  below    
and  using that under our assumption~\eqref{eq:medium tau} 
we have $\tau^{11/3} \ge  \tau^5/p$, 
we obtain 
  \begin{align*}
 \frac{1}{p^r}  \sum_{\substack{a_1, \ldots, a_r\in \F_p\\ \(a_1, \ldots, a_r\)\  \ne \mathbf{0}}} 
  \left|\sum_{g  \in \cG}   \ep\(a_1 g^{n_1} + \ldots + a_r g^{n_r}\) \right|^{2k}
 & \ll  \(\tau   p^{-\eta} \)^{2k-6} \tau^{11/3} \\
 & = \tau^{2k-7/3} p^{-\eta(2k-6)}. 
  \end{align*}
  Recalling~\eqref{eq:medium tau} again we see that 
\begin{align*}
 \frac{1}{p^r}  \sum_{\substack{a_1, \ldots, a_r\in \F_p\\ \(a_1, \ldots, a_r\)\  \ne \mathbf{0}}} &
  \left|\sum_{g  \in \cG}   \ep\(a_1 g^{n_1} + \ldots + a_r g^{n_r}\) \right|^{2k}\\
 & \qquad  \ll   \tau^{2k} p^{-\eta(2k-6) -1 -7\varepsilon/3}, 
 \end{align*}
 which together with~\eqref{eq:Qkmn} concludes the proof. 
\end{proof}

\subsection{Bounds on the number of solutions to some systems of  equations in six variables}
\label{sec;syst eq}
We start with an observation that the results of this section are independent of those 
in Section~\ref{sec:Exp2Eq} and hence there is no logical problem in our use of them 
in the proof of Lemma~\ref{lem:Cond Bound}.

For $r=2$ and $\vn = (m,n)$ we write $Q_{k}(m,n;\cG)$ for $Q_{k}(\vn;\cG)$.

Here we obtain some bounds on  $Q_{3}(m,n;\cG)$.  In fact, it is easier to
work with  the following system of equations
\begin{equation}
\label{eq: xkmns}
  \begin{cases}
&x_1^{sm}+x_2^{sm}+ x_3^{sm} =  x_{4}^{sm}+x_5^{sm} + x_{6}^{sm}\\
& x_1^{sn}+x_2^{sn}+ x_3^{sn} =  x_{4}^{sn}+x_5^{sn} + x_{6}^{sn}
  \end{cases}, \quad 
x_1, \ldots ,  x_{6} \in \F_p^*,
\end{equation}
instead of the system of the type~\eqref{eq:gen syst} with group elements.

Denoting by $T_{3}(m,n;s)$ the number of solutions to~\eqref{eq: xkmns} we see that 
\begin{equation}
\label{eq: Q and T} 
Q_{3}(m,n;\cG) = s^{-6} T_{3}(m,n;s), 
\end{equation}
where 
$$
s = \frac{p-1}{\tau}
$$
and as before $\tau = \# \cG$. 

\begin{lemma}
\label{lem:T3} 
For integers $n > m >0$, we have
$$
 T_{3}(m,n;s) \ll s^{7/3} p^{11/3} + sp^4.
 $$ 
  \end{lemma}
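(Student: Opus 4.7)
The plan is to parametrize the system~\eqref{eq: xkmns} by its common right-hand side $(A,B) \in \F_p^2$ and to apply Lemmas~\ref{lem:AbsIrred} and~\ref{lem:HighDeg} to the resulting plane curve. First I would reduce to the case $\gcd(m,n)=1$: writing $d = \gcd(m,n)$, $m = dm'$, $n = dn'$, the system~\eqref{eq: xkmns} for $(m,n;s)$ is identical to that for $(m',n';sd)$, so the coprime case yields $T_3(m,n;s) = T_3(m',n';sd) \ll (sd)^{7/3} p^{11/3} + (sd) p^4 \ll s^{7/3} p^{11/3} + sp^4$, absorbing the constant $d$ into the implied constant.

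Assume henceforth $\gcd(m,n)=1$, and for $(A,B) \in \F_p^2$ set
$$
N(A,B) = \#\Bigl\{(x_1,x_2,x_3) \in (\F_p^*)^3 : \sum_{i=1}^3 x_i^{sm} = A,\ \sum_{i=1}^3 x_i^{sn} = B\Bigr\}.
$$
By the symmetry of~\eqref{eq: xkmns} between the two triples,
$$
T_3(m,n;s) = \sum_{A,B \in \F_p} N(A,B)^2 \mand \sum_{A,B \in \F_p} N(A,B) = (p-1)^3 \le p^3.
$$
The central step is the fiber bound $N(A,B) \le s \cdot \#\{(X,Y)\in\F_p^2 : F_{A,B}(X,Y)=0\}$, where
$$
F_{A,B}(X,Y) = (A - X^{sm} - Y^{sm})^n - (B - X^{sn} - Y^{sn})^m.
$$
Indeed, any triple counted by $N(A,B)$ projects to a point $(x_1,x_2)$ of the curve $F_{A,B}=0$ because $u^n = x_3^{smn} = v^m$ where $u = A - x_1^{sm} - x_2^{sm}$ and $v = B - x_1^{sn} - x_2^{sn}$. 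Once $(x_1,x_2)$ is fixed, one must have $u,v\ne 0$ (else $x_3=0$), and Bezout's identity $am+bn=1$ forces $x_3^s = u^a v^b$ to be uniquely determined, leaving at most $s$ choices for $x_3 \in \F_p^*$.

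Up to signs---a global sign when $n-m$ is even, or a sign on the second term when $n-m$ is odd---$F_{A,B}$ coincides with the polynomial of Lemma~\ref{lem:AbsIrred}; the proof of that lemma goes through verbatim in the sign-flipped case, so there is a fixed nonzero $\Delta(U,V)\in\Z[U,V]$ with $F_{A,B}$ absolutely irreducible of degree $d=smn$ whenever $\Delta(A,B)\ne 0$. Then Lemma~\ref{lem:HighDeg} gives $\#\{F_{A,B}=0\} \le 4d^{4/3}p^{2/3} + 3p \ll s^{4/3}p^{2/3} + p$, so $N(A,B) \ll s^{7/3} p^{2/3} + sp$ on this generic locus, and
$$
\sum_{\Delta(A,B)\ne 0} N(A,B)^2 \le \Bigl(\max_{\Delta\ne 0} N(A,B)\Bigr)\sum_{A,B \in \F_p} N(A,B) \ll \bigl(s^{7/3}p^{2/3} + sp\bigr) p^3 = s^{7/3}p^{11/3} + sp^4,
$$
which is the target bound. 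On the degenerate locus $\{\Delta(A,B)=0\}$ the trivial bound $\#\{F_{A,B}=0\}\le dp \ll sp$ gives $\max_{\text{bad}} N \ll s^2 p$, while $\Delta(A(x_1,x_2,x_3),B(x_1,x_2,x_3))$ viewed as a polynomial in three variables has total degree $O(s)$ and is not identically zero (because the power-sum map $(x_1,x_2,x_3)\mapsto(A,B)$ is dominant), so the standard hypersurface bound gives $\sum_{\text{bad}} N \ll sp^2$. Hence $\sum_{\text{bad}} N^2 \ll s^3 p^3$, which is $\le s^{7/3}p^{11/3}$ since $s \le p$.

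The main obstacle is the absolute-irreducibility step: verifying that the proof of Lemma~\ref{lem:AbsIrred} adapts to $F_{A,B}$ when $m$ and $n$ have different parities, for which one checks that the gradient and branch analysis on the auxiliary system $X^m+Y^m-A=Z^m$, $X^n+Y^n-B=\pm Z^n$ is insensitive to the sign flip. A secondary subtlety, handled by a direct Jacobian computation, is the nontriviality of $\Delta(A(\cdot),B(\cdot))$ as a polynomial in $(x_1,x_2,x_3)$.
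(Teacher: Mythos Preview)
Your proof is correct and follows essentially the same route as the paper: reduce to $\gcd(m,n)=1$, parametrise by the common value $(A,B)$, apply Lemma~\ref{lem:AbsIrred} together with Lemma~\ref{lem:HighDeg} on the locus $\Delta(A,B)\ne 0$, and treat the degenerate locus $\Delta(A,B)=0$ via the trivial curve bound and a Schwartz--Zippel count for the composed polynomial $\Delta(A(\cdot),B(\cdot))$. Your packaging via $T_3=\sum_{A,B}N(A,B)^2\le(\max N)(\sum N)$ is just a reorganisation of the paper's ``fix $(x_4,x_5,x_6)$'' argument, and your explicit treatment of the sign discrepancy between $(A-X^{sm}-Y^{sm})^n-(B-X^{sn}-Y^{sn})^m$ and the polynomial of Lemma~\ref{lem:AbsIrred} (which the paper passes over silently) is a welcome clarification rather than a genuine departure.
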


\begin{proof} 
First we note that if $\gcd(m,n)= d$ then 
$$ 
T_{3}(m,n;s) \le  e^6 T_{3}(m/d,n/d;s), 
$$
where $e = \gcd(d,p-1)$. 

Hence we can assume that 
\begin{equation}
\label{eq: gcd 1} 
\gcd(m,n)=1, 
\end{equation}
which enables us to apply Lemma~\ref{lem:AbsIrred}.

We  now fix $x_4$, $x_5$ and $x_6$ and thus we obtain  
$(p-1)^3$ systems of equations of the form
$$
 \begin{cases}
&x_1^{sm}+ x_2^{sm}+ x_3^{sm} = A\\
& x_1^{sn}  + x_2^{sn}+ + x_3^{sn}   = B  \end{cases}, \qquad 
x_1,  x_2, x_3 \in \F_p^*, 
$$
where
$$
A=x_4^{sm}+ x_5^{sm}+ x_6^{sm} \mand B=x_4^{sn}+ x_5^{sn}+ x_6^{sn},
$$ 
from which we derive 
\begin{equation}
\label{eq: xxsm xxsn} 
\(x_1^{sm}+ x_2^{sm}  -A\)^n = \(x_1^{sn} + x_2^{sn} -B\)^m.
\end{equation}

Under the assumption~\eqref{eq: gcd 1}, let the polynomials 
 $\Delta(U,V)\in \Z[U,V]$ be as in Lemma~\ref{lem:AbsIrred}. 
 
 Since $\Delta$ depends only on $m$ and $n$, we see that if $p$ is large 
 enough, $\Delta$ is a non-zero polynomial modulo $p$.

We assume first that $\Delta(A,B) = 0$ for  a pair $(A,B)\in \F_p^2$ as above.  
 Thus
\begin{equation}
\label{eq:delta eq}
 \Delta\(x_4^{sm}+ x_5^{sm}+ x_6^{sm}, x_4^{sn}+ x_5^{sn}+ x_6^{sn}\) = 0.
\end{equation}

If $\Delta\(X^{sm}+x_5^{sm}+ x_6^{sm},X^{sn}+ x_5^{sn}+ x_6^{sn}\)$, as a polynomial in $X$, is not identically zero for some $(x_5,x_6)\in\F_p^2$, then obviously it has $O(s)$ zeros. Thus, in this case, the equation~\eqref{eq:delta eq} has  $O(sp^2)$ solutions $(x_4,x_5,x_6)\in\F_p^3$.
 
 On the other hand, if $\Delta\(X^{sm}+x_5^{sm}+ x_6^{sm},X^{sn}+ x_5^{sn}+ x_6^{sn}\)$, as a polynomial in $X$, is identically zero, then it also holds for $X=0$, thus
\begin{equation}
\label{eq: Bad x4x5} 
 \Delta\(x_5^{sm}+ x_6^{sm}, x_5^{sn}+ x_6^{sn}\)=0.
 \end{equation}
Now, a similar argument shows that~\eqref{eq: Bad x4x5}  holds for $O(sp)$ pairs $(x_5,x_6)$
 for which there are at most $p$ values of $x_4$. 
 
 Therefore, the equation~\eqref{eq:delta eq} has   $O(sp^2)$ solutions $(x_4,x_5,x_6)\in\F_p^3$
  in total.

For each of such $O\(sp^2\)$ values of $(x_4,x_5,x_6)$ the 
corresponding equation~\eqref{eq: xxsm xxsn}  is 
 nontrivial  since it contains a unique term $nx_1^{sm(n-1)} x_2^{sm}$
and hence has $O(sp)$ solutions $(x_1,x_2)$ after which there are $O(s)$ possible values for $x_3$.
Hence, the total contribution from the case $\Delta(A,B) = 0$ is  $O\(s^3 p^3\)$. 

If  $\Delta(A,B) \ne 0$, then for the corresponding $O(p^3)$ possibilities for $(x_4, x_5, x_6) \in \F_p^3$, 
 by Lemma~\ref{lem:AbsIrred},  we can apply 
Lemma~\ref{lem:HighDeg} to bound the number of solutions to~\eqref{eq: xxsm xxsn} (after which we have $O(s)$ possibilities 
for $x_3$). 
 Hence, the total contribution from the case 
$\Delta(A,B) \ne 0$ is  $O\(s \(s^{4/3} p^{2/3} + p\)p^3\)$. 

Therefore $T_{3}(m,n;s) \le s^3 p^3 + s^{7/3} p^{11/3} + sp^4$.
Since $s^3 p^3 \le s^{7/3} p^{11/3}$ for $s \le p$,   the result follows.  
\end{proof} 

Recalling~\eqref{eq: Q and T}, we see that  Lemma~\ref{lem:T3} implies
the following. 

\begin{cor}
\label{cor:Q3} 
For integers $n > m >0$, we have
$$
Q_{3}(m,n; \cG) \ll \tau^{11/3} + \tau^5/p.
 $$
  \end{cor}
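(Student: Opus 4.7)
The plan is that this corollary is a direct consequence of Lemma~\ref{lem:T3} together with the identity~\eqref{eq: Q and T}, so the proof is essentially just algebraic book-keeping. First I would recall that $s = (p-1)/\tau$, so in particular $s \asymp p/\tau$ with absolute implied constants, and
$$
Q_{3}(m,n;\cG) = s^{-6} T_{3}(m,n;s).
$$

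Next, I would substitute the bound $T_{3}(m,n;s) \ll s^{7/3} p^{11/3} + s p^4$ from Lemma~\ref{lem:T3} and treat the two resulting terms separately. The first term gives
$$
s^{-6} \cdot s^{7/3} p^{11/3} = s^{-11/3} p^{11/3} \asymp \left(\tau/p\right)^{11/3} p^{11/3} = \tau^{11/3},
$$
while the second gives
$$
s^{-6} \cdot s p^{4} = s^{-5} p^{4} \asymp \left(\tau/p\right)^{5} p^{4} = \tau^{5}/p.
$$
Summing these two contributions yields the claimed bound $Q_{3}(m,n;\cG) \ll \tau^{11/3} + \tau^{5}/p$.

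There is no genuine obstacle here, since the real content already sits in Lemma~\ref{lem:T3}; the only thing to be slightly careful with is that $s = (p-1)/\tau$ rather than exactly $p/\tau$, but the discrepancy is harmless and absorbed into the implied constants. Thus the corollary follows immediately.
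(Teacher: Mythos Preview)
Your proposal is correct and matches the paper's own argument exactly: the paper simply states that the corollary follows from the identity~\eqref{eq: Q and T} together with Lemma~\ref{lem:T3}, and your computation spells out precisely this substitution. The only extra remark you make, about $s=(p-1)/\tau$ versus $p/\tau$, is the right sanity check and is indeed absorbed into the implied constants.
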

  
  \begin{rem}
We recall that the method of Kurlberg and Rudnick~\cite[Lemma~5]{KR},
immediately implies that $Q_{2}(m,n; \cG) \ll \tau^{2}$. However this bound is
not sufficient for our purpose. 
\end{rem}

 \subsection{Bounds on monomial and  binomial sums}
 \label{sec:bin sum}
 
  First we recall the following result of Shkredov~\cite[Theorem~1]{Shkr} (with a slight 
 generalisation and also combined with a direct implication of~\eqref{eq:Weil}).
 
  \begin{lem}
\label{lem:Monom}
Let  $f(X)  = aX^n \in \F_p[X]$ of degree $n\ge 1$ and with $a  \ne 0$,  and let $\cG\subseteq \F_p^*$ 
be a   subgroup $\cG$ of order $\tau$. Then 
$$
S(\cG; f) \ll   \min\{p^{1/2}, \tau^{1/2}   p^{1/6} (\log p)^{1/6}\}.
$$
\end{lem}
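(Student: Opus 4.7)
My plan is to prove the two bounds inside the minimum separately and then take the smaller one. For the first bound, $|S(\cG;f)|\ll p^{1/2}$, I would simply apply the classical Weil-type identity~\eqref{eq:Weil subgr} to the monomial $f(X)=aX^n$: parametrising $\cG$ as the image of the map $x\mapsto x^{(p-1)/\tau}$ yields
$$
S(\cG;aX^n) \;=\; \frac{\tau}{p-1}\sum_{x=1}^{p-1}\ep\!\left(a x^{n(p-1)/\tau}\right).
$$
Since $a\ne 0$, the complete sum on the right is a nontrivial Gauss sum of degree at most $n(p-1)/\tau$, to which the Weil bound~\eqref{eq:Weil} applies, producing the estimate $O\!\left(n(p-1)/\tau \cdot p^{1/2}\right)$; multiplying by $\tau/(p-1)$ gives the claimed $O(p^{1/2})$, absorbing the factor $n$ into the implied constant.

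For the second bound, I would reduce the monomial sum to a linear Gauss sum over a (generally smaller) subgroup and then invoke~\cite[Theorem~1]{Shkr}. Setting $d=\gcd(n,\tau)$, the $n$th power map $x\mapsto x^n$ on $\cG$ is exactly $d$-to-one onto the subgroup $\cH=\{x^n: x\in\cG\}\subseteq\F_p^*$ of order $\tau/d$, so that
$$
S(\cG;aX^n) \;=\; d\sum_{y\in\cH}\ep(ay).
$$
Shkredov's bound on linear sums over multiplicative subgroups then gives
$$
\left|\sum_{y\in\cH}\ep(ay)\right| \;\ll\; (\tau/d)^{1/2}\,p^{1/6}(\log p)^{1/6},
$$
uniformly in the nonzero coefficient $a$. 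Multiplying by $d\le n$, which is absorbed into the implied constant (as all implied constants may depend on $n$), yields $|S(\cG;f)|\ll \tau^{1/2}\,p^{1/6}(\log p)^{1/6}$. Combining this with the first bound gives the minimum stated in the lemma.

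I do not anticipate any serious obstacle: the lemma is essentially a repackaging of the classical Weil estimate and of Shkredov's bound, prepared in a form convenient for its later use as the $r=1$ base case of our inductive argument. The only minor point requiring attention is verifying uniformity of~\cite[Theorem~1]{Shkr} across all nonzero $a\in\F_p^*$ and all subgroups $\cH\subseteq\F_p^*$, and accounting for the factor $d=\gcd(n,\tau)\le n$ arising from the $n$th power reduction; both are routine.
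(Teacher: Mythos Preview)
Your proposal is correct and follows essentially the same route as the paper: the bound $p^{1/2}$ is exactly the identity~\eqref{eq:Weil subgr}, and for the second bound the paper performs the identical reduction $S(\cG;f)=d\sum_{x\in\cG^d}\ep(ax)$ with $d=\gcd(\tau,n)$ before appealing to Shkredov's~\cite[Theorem~1]{Shkr}. Your write-up simply makes explicit the details (the $d$-to-one map, the absorption of $d\le n$ into the implied constant) that the paper leaves to the reader.
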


\begin{proof}
We remark that the result of  Shkredov~\cite[Theorem~1]{Shkr} corresponds to $n=1$. Otherwise we note that 
$$
S(\cG; f) = d  \sum_{x \in \cG^d} \ep(ax)
$$
where $d = \gcd(\tau, n)$ and $\cG^d= \{g^d:~ g \in \cG\}$. 
\end{proof}

  We now derive the following estimate,  which improves~\eqref{eq:Weil subgr}
  for $\tau \le p^{21/40}$, remains nontrivial for $\tau \ge p^{3/7+\varepsilon}$ for any fixed $\varepsilon>0$
   and  which we believe is of independent interest.
For this, we apply Lemma~\ref{lem:S and Q} with  $k=\ell=3$ and we use Corollary~\ref{cor:Q3}.
  
 \begin{lemma}
\label{lem:Binom}
Let  $f(X)  = aX^m + bX^n \in \F_p[X]$ with integers $n > m \ge 1$ and $(a,b) \ne (0,0)$, and let $\cG\subseteq \F_p^*$ 
be a   subgroup of order $\tau$. Then 
$$
S(\cG; f) 
\ll   \tau^{20/27}   p^{1/9}.
  $$
\end{lemma}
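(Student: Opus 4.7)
The plan is to combine Lemma~\ref{lem:S and Q} with the sixth-moment estimate from Corollary~\ref{cor:Q3}, choosing parameters $k=\ell=3$, and then to handle a few trivial corner cases separately.

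First I would dispose of the degenerate cases. If exactly one of $a,b$ vanishes, then $f$ is a monomial and Lemma~\ref{lem:Monom} applies directly; a short check verifies that both $p^{1/2}$ and $\tau^{1/2} p^{1/6} (\log p)^{1/6}$ are bounded by $\tau^{20/27} p^{1/9}$ as soon as $\tau$ is at least roughly $p^{3/7}$, while for smaller $\tau$ the claimed bound exceeds the trivial bound $\tau$ and holds vacuously. Similarly, if $\tau > p^{3/4}$, the classical estimate \eqref{eq:Weil subgr} yields $|S(\cG;f)| \ll p^{1/2}$, and since $p^{1/2} \le \tau^{20/27} p^{1/9}$ whenever $\tau \ge p^{21/40}$, this case is already settled.

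The main case is $a,b \in \F_p^*$ with $\tau \le p^{3/4}$. Here I apply Lemma~\ref{lem:S and Q} with $r=2$, $\vn=(m,n)$, and $k=\ell=3$, which gives
\[
|S(\cG;f)|^{18} \le p^{2} \tau^{18-6-6} Q_3(m,n;\cG)^{2} = p^{2} \tau^{6} Q_3(m,n;\cG)^{2}.
\]
Substituting the bound $Q_3(m,n;\cG) \ll \tau^{11/3} + \tau^{5}/p$ from Corollary~\ref{cor:Q3} yields
\[
|S(\cG;f)|^{18} \ll p^{2} \tau^{40/3} + \tau^{16},
\]
so taking the $18$-th root gives $|S(\cG;f)| \ll \tau^{20/27} p^{1/9} + \tau^{8/9}$.

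Finally, I would observe that in the range $\tau \le p^{3/4}$ the second term is dominated by the first: indeed $\tau^{8/9} \le \tau^{20/27} p^{1/9}$ is equivalent to $\tau^{4/27} \le p^{3/27}$, i.e.\ $\tau \le p^{3/4}$. Combining the three cases yields the stated estimate $S(\cG;f) \ll \tau^{20/27} p^{1/9}$. Since both major inputs (Lemma~\ref{lem:S and Q} and Corollary~\ref{cor:Q3}, whose proof relied on the generic absolute irreducibility established in Lemma~\ref{lem:AbsIrred} and the point count from Lemma~\ref{lem:HighDeg}) are already available, there is no real obstacle; the only delicate point is the choice $k=\ell=3$, which is exactly what makes the exponents from the sixth-moment bound $\tau^{11/3}$ line up to produce the exponent $20/27$.
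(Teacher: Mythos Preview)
Your argument is correct and follows essentially the same route as the paper: apply Lemma~\ref{lem:S and Q} with $k=\ell=3$ together with Corollary~\ref{cor:Q3}, handle $ab=0$ via Lemma~\ref{lem:Monom}, and cover large $\tau$ by the Weil bound~\eqref{eq:Weil subgr}. The only cosmetic differences are that the paper uses the cut-off $\tau\le p^{21/40}$ (so that $Q_3\ll\tau^{11/3}$ before substitution) whereas you use $\tau\le p^{3/4}$ and simplify afterwards, and that in the monomial case it is the \emph{minimum} in Lemma~\ref{lem:Monom} (specifically the Shkredov term for $\tau$ near $p^{3/7}$) that is dominated by $\tau^{20/27}p^{1/9}$, not literally both terms---but this does not affect the conclusion.
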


\begin{proof}
 If $ab= 0$ then the result is instant from Lemma~\ref{lem:Monom}
 
 Hence we now assume $a,b \in \F_p^*$. We apply Lemma~\ref{lem:S and Q} with 
 $$
 (k, \ell) = (3,3)
 $$
and the bound of Corollary~\ref{cor:Q3}.
 We also note that for $\tau >p^{21/40}$ we have 
$$
p^{1/2} \le   \tau^{20/27}   p^{1/9}.
 $$
 Hence we only need to apply  Corollary~\ref{cor:Q3} for $\tau \le p^{21/40}$ 
 in which case $ \tau^{11/3} \ge  \tau^5/p$,  and thus in this case  we simply have 
 $Q_{3}(m,n; \cG) \ll \tau^{11/3}$ and the result follows. 
\end{proof}


  \section{Proof of Theorem~\ref{thm:Gen}} 
  \subsection{Preliminaries and the basis of induction} 
  We prove the result by induction on the number of terms $r$ in the polynomial
$$
 f(X)  = a_r X^{n_r} + \ldots + a_1 X^{n_1}  \in \F_p[X]
 $$
 with nonzero coefficients $a_1, \ldots, a_r \in \F_p^*$ and integer exponents $n =n_r> \ldots > n_1 \ge 1$.
 
We  see from Lemma~\ref{lem:Binom}  that for $r=1,2$ the condition~\eqref{eq:Small sum} 
of Lemma~\ref{lem:Cond Bound}
is satisfied with 
$$
\eta = \eta_1(\varepsilon) \mand \eta = \eta_2(\varepsilon),
$$
respectively, 
where $\eta_{1}(\varepsilon)$ and $\eta_{2}(\varepsilon)$ are given by~\eqref{eq:eta2}, which form the basis of induction. 

  \subsection{Inductive step} 
  Assume that that the result holds for all nontrivial polynomials of degree at most $n$ with at most 
  $r-1$ monomials and we prove it for polynomials with $r \ge 3$ monomials. 
   First we note that we can assume that $\tau \le p^{3/4}$ since otherwise 
  the bound~\eqref{eq:Weil subgr} is stronger than that of Theorem~\ref{thm:Gen}.
  
  In particular, the condition~\eqref{eq:medium tau} of Lemma~\ref{lem:Cond Bound}  
  is satisfied. 
  We fix some arbitrary positive integers $k$, $\ell$, $u$ and $v$ with $u, v \le r$.
  Let 
  $$
  \vn_u = (n_1, \ldots, n_u) \mand \vn_v= (n_1, \ldots, n_v) .
  $$
  We now use the trivial
  bounds
\begin{equation}
\label{eq:reduction}
  Q_{k}(\vn;\cG) \le  Q_{k}(\vn_u;\cG) \mand Q_{\ell}(\vn;\cG)\le Q_{\ell}(\vn_v;\cG).
\end{equation}
 In fact we choose $u = r-1$ and $v  = 2$. 
 Furthermore, we recall the definition~\eqref{eq:kappan} and set 
\begin{equation}
\label{eq:k l}
 k = \kappa_r(\varepsilon) \mand \ell = 3,
\end{equation}
 in which case, using the induction assumption,
  by Lemma~\ref{lem:Cond Bound}, used with $u = r-1$ instead of $r$, we have 
\begin{equation}
\label{eq:Qknr-1}
 Q_{k}(\vn_{r-1};\cG)\ll \tau^{2k}/p^{r-1},
\end{equation}
 while by Corollary~\ref{cor:Q3}, using that $\tau\le p^{3/4}$, we obtain  
\begin{equation}
\label{eq:Q3n2}
 Q_{3}(\vn_2;\cG) \ll  \tau^{11/3} +  \tau^5/p \ll  \tau^{11/3}.
\end{equation}
Indeed,~\eqref{eq:Qknr-1} follows from the definition of $\kappa_r(\varepsilon)$ in~\eqref{eq:kappan}, which ensures that $\xi =r-1$ in Lemma~\ref{lem:Cond Bound}.

Substituting the bounds~\eqref{eq:reduction}, \eqref{eq:Qknr-1} and \eqref{eq:Q3n2} in the bound of Lemma~\ref{lem:S and Q}  
we obtain 
 $$
S(\cG; f)^{6k} \ll p^r \tau^{4k -6} \frac{\tau^{2k}}{p^{r-1}} \tau^{11/3}
= \tau^{6k -7/3} p \le  \tau^{6k -7\varepsilon/3} .
$$
Hence 
 $$
S(\cG; f) \ll \tau^{1 -7\varepsilon/18k}.
$$
Recalling the definition~\eqref{eq:etan} and the choice of $k$ in~\eqref{eq:k l}, we  conclude the proof.

\section{Comments}

If $\vartheta$ is a generator of $\cG$ then the sum $S(\cG;f)$ can be written as
$$
S(\cG;f) =  \sum_{x=1}^{\tau} \ep\(f\(\vartheta^{x}\)\).
$$
This reformulations also allows to generalise these sums  to twisted sums, 
$$
S_b(\cG;f) =  \sum_{x=1}^{\tau} \ep\(f\(\vartheta^{x}\)\)   \exp(2 \pi i bx/\tau),
$$
to which all our results apply without any changes (with just minor typographic 
adjustments). In turn, together with the well-know completing technique (see, for example,~\cite[Section~12.2]{IwKow}) 
bounds on the sums $S_b(\cG;f) $ lead to bounds on incomplete sums 
$$
 \sum_{x=1}^{N} \ep\(f\(\vartheta^{x}\)\), \qquad  1 \le N \le \tau.
 $$
 We note that in~\cite{NiWi} sequences of the form $\(f\(\vartheta^{x}\)\)$ have been studied
 as sources of pseudorandom numbers, but with nontrivial results only in the case of 
 periods $\tau > p^{1/2+\varepsilon}$,  while our results allows us to extend this range to 
$\tau > p^{3/7+\varepsilon}$.

We also note that in~\cite{MeWi,Wint} the sequence
$$
\(a\vartheta^{x}+b\)^{-1}, \qquad n =1, 2, \ldots,
$$
has been suggested as a source of pseudorandom numbers. Unfortunately neither the method of 
Bourgain~\cite{Bourg1} nor of this work applies to the corresponding exponential sums 
$$
 \sum_{x=1}^{N} \ep\(\(a\vartheta^{x}+b\)^{-1}\), \qquad  1 \le N \le \tau, 
 $$
 (with a natural convention that the values with $a\vartheta^{x}=-b$ are excluded), 
which are necessary for investigating this sequence.  
So we leave a question of obtaining such nontrivial  bounds  for $\tau < p^{1/2}$ as 
an open problem.  Even the case of complete sums  
$$
\sum_{g \in \cG}  \ep\(\(ag+b\)^{-1}\) 
$$
is of interest.
 
\section*{Acknowledgement}

During the preparation of this work, the first two authors (A.O. and I.E.S.) were partially supported by the
Australian Research Council Grant DP200100355. The third author (J.F.V.) was partially supported by a grant from the Ministry of Business, Innovation and Employment. 

\end{document}